\newtheorem{thm}{Theorem}[section]
\newtheorem{prop}[thm]{Proposition}
\newtheorem{lem}[thm]{Lemma}
\newtheorem{cor}[thm]{Corollary}
\theoremstyle{definition}
\newtheorem{exm}[thm]{Example}
\theoremstyle{remark}
\numberwithin{equation}{section}
		\def\oA{\bar{A}}
\def\tK{\tilde{K}}				\def\tA{\tilde{A}}
\def\tF{\tilde{F}}		\def\tM{\tilde{M}}
\def\tc{\tilde{c}}		
\def\hH{\hat{H}} 
			\def\xx{\times}
\def\*{\otimes}		\def\+{\oplus}		
\def\bop{\bigoplus}	
\def\sb{\subset}         \def\sp{\supset}
\def\spe{\supseteq}      \def\sbe{\subseteq}
\def\0{\emptyset}		\def\8{\infty}
\def\<{\langle}		\def\>{\rangle}
\def\mps{\mapsto}		
\def\Arr{\Rightarrow}
\def\bt{{\bowtie}}
\def\Hom{\mathop\mathrm{Hom}\nolimits}
\def\im{\mathop\mathrm{Im}\nolimits}
\def\End{\mathop\mathrm{End}\nolimits}
\def\rad{\mathop\mathrm{rad}\nolimits}
\def\Mat{\mathop\mathrm{Mat}\nolimits}
\def\rat{\mathop\mathrm{rt}\nolimits}
\def\lat{\mbox{-}\mathrm{lat}}
\def\tr{\mathop\mathrm{tr}\nolimits}
\def\mtr#1{\begin{pmatrix}#1\end{pmatrix}}
\def\smtr#1{\left(\begin{smallmatrix}#1\end{smallmatrix}\right)}
\def\rpp#1#2#3#4#5{\fbox{$\scriptstyle \begin{smallmatrix} #1\\#2	\end{smallmatrix}  \begin{smallmatrix} #3\\#4\\#5\end{smallmatrix}$}}
\def\Bk{B\"ackstr\"om}
\def\oc{one-to-one correspondence}
\def\arq{Auslander-Reiten quiver}
\def\art{Auslander-Reiten transform}
\def\iff{if and only if }
\def\La{\Lambda}      
\def\Ga{\Gamma}       
\def\Om{\Omega}       \def\th{\theta}
\def\al{\alpha}       
\def\be{\beta}        \def\eps{\varepsilon}
\def\ga{\gamma}       
\def\vi{\varphi}      
\def\si{\sigma}       \def\om{\omega}
 \def\mQ{\mathbb Q}
\def\mF{\mathbb F}
 \def\mZ{\mathbb Z}
\def\dA{\mathfrak A} \def\dN{\mathfrak N}
 \def\kT{\mathcal T}
\title[Representations of the alternating group]{Representations and cohomologies of the alternating group of degree 4} 
\author{Yuriy~Drozd and Andriana~Plakosh}
\address{Harvard University, Cambridge, MA, USA and Institute of Mathematics, National Academy of Sciences of Ukraine,
Kyiv, Ukraine}
\email{y.a.drozd@gmail.com}
\address{Institute of Mathematics, National Academy of Sciences of Ukraine, Kyiv, Ukraine}
\email{andrianaplakosh@gmail.com}
\keywords{alternating group, integral representations, Tate cohomologies, Auslander-Reiten quiver, B\"ackstr\"om orders, representations of valued graphs}
\subjclass[2020]{20C10,20C11,20J15,16G30,16G70}
\thanks{The second author was supported by the Simons Foundation grants (1030291, 1290607, A.I.P.).}
\begin{document}
 
 \maketitle
 
\begin{abstract}
 We describe integral representations of the alternating group $\dA_4$, in particular, the \arq\ of its $2$-adic representations.
 Using these results we calculate Tate cohomologies of all $\dA_4$-lattices.
\end{abstract}

 \section*{Introduction}
 
  In the paper \cite{nazA4} Nazarova described $2$-adic representations of the alternating group $\dA_4$ of order $4$.
 Unfortunately, it is very difficult to use this description for other purposes, such as calculation of cohomologies. In this paper
 we propose another approach, analogous to the study of representations and cohomologies of the Klein 4-group in \cite{dpl-kl}.
 Namely, we use the technique of \emph{\Bk\ orders} from \cite{rinrog,rog} and thus relate the description of $2$-adic representations
 of $\dA_4$ with representations of a \emph{valued graph} \cite{DR}. It allows to completely describe the \arq\ of
 $2$-adic representations. As the \art\ in this case coincides with syzygy, it gives almost immediately the values of all
 Tate cohomologies of $2$-adic $\dA_4$-lattices. Since $3$-adic representations of $\dA_4$ are very simple, we also describe 
 all integral representations and their cohomologies. Note that knowing cohomologies is important for applications, such as
 classification of crystallographic and Chernikov groups etc.

\section{Representations. Local structure}
\label{2ad} 
 
 Let $G=\dA_4$ be the alternating group of degree $4$, $N$ be its \emph{Klein subgroup} 
 $N\simeq\{1,a,b,c\mid a^2=b^2=1,\,ab=ba=c\}$, $H=G/K\simeq\<\si\mid\si^3=1\>$. We denote by $A=\mZ G$ the
 group ring of $G$ and set $A_p=A\*\mZ_p$, the $p$-adic completion of $A$, and $QA=A\*\mQ$, the rational
 envelope of $A$. By $A\lat$ (respectively $A_p\lat$) we denote the category of $A$\emph{-lattices}, i.e. $A$-modules
 $M$ such that, as a group, $M$ is a free abelian group of finite rank (respectively, free $\mZ_p$-module of finite
 rank). For an $A$-lattice $M$ we also denote $QM=M\*\mQ$ and $M_p=M\*\mZ_p$. Note that $QM_p\simeq M\*\mQ_p$.
 
 The ring $A$ can be considered as the \emph{crossed product} $K*H$, where $K=\mZ N$ and $H$ naturally acts
 on $K$ by conjugation. Here $QK\simeq\mQ^4$ with the basis $\{e_1,e_a,e_b,e_c\}$, where
  \begin{align*}
   e_1 &= \frac{1+a+b+c}4,\\ 
   e_a &= \frac{1+a-b-c}4,\\
   e_b &= \frac{1-a+b-c}4,\\
   e_c &= \frac{1-a-b+c}4.         
  \end{align*}
 Under this identification, $K_p=\mZ_p^4$ if $p\ne2$ and $K_2$ embeds into $\mZ_2^4$ so that $a,b,c$
 identifies, respectively, with the elements $(1,1,-1,-1),(1,-1,1,-1)$ and $(1,-1,-1,1)$. 
  The action of $H$ is trivial on the first component of $QK$ and cyclically permutes the other three. Hence 
  \begin{align*}
   &QA\simeq QK*H\simeq \mQ H\xx\Mat(3,\mQ)\simeq\mQ\xx\mQ[\th]\xx\Mat(3,\mQ),\\
   \intertext{where $\th$ is a primitive cubic root of 1. If $p\notin\{2,3\},$ then}   
   &A_p\simeq\mZ_p\xx\mZ_p[\th]\xx\Mat(3,\mQ_p)\ (\text{a maximal order in } QA_p),\\
   \intertext{and}
   & A_3\simeq \mZ_3H\xx\Mat(3,\mZ_3).
  \end{align*}
  Therefore, all indecomposable $A_p$-lattices for $p\notin\{2,3\}$ are irreducible lattices $\mZ_p,\,Z_p[\th]$ and $I_p=\mZ_p^3$,
  and for $A_3$ there is one more indecomposable lattice $\mZ_3H$. 
   
  The case $p=2$ is quite different, since $\mZ_2K$ is no more a maximal order.   
  Recall that every group ring $R$ is \emph{Gorenstein}, i.e. $\mathop\mathrm{inj.dim}_RR=1$. Therefore, all non-projective
  $R_p$-lattices are actually lattices over the overring $R^+=\End_R(\rad R_p)$. As $\#(H)$ is invertible in $\mZ_2$, for the crossed
  product $A_2=K_2*H$ we have that $\rad A_2=(\rad K)*H$ and $A^+=K^+*H$. Note that $K^+$ is a \emph{\Bk\ order} 
  in the sense of \cite{rinrog}. It means that there is a hereditary order $\tK$ such that $\tK\sp K^+\sp\rad\tK=\rad K^+$. In our
  case $\tK=\mZ^4_2$ and $K^+=\{(x_1,x_2,x_4,x_4)\mid x_1\equiv x_2\equiv x_3\equiv x_4\!\pmod2\}$. As $\tA=\tK*H$ is
  also hereditary, $A^+$ is also a \Bk\ order. Namely, 
  $$ \tA\simeq \mZ_2\+\mZ_2[\th]\xx\Mat(3,\mZ_2).$$
  One can easily see that $A^+$ embeds into $\tA$ as the subring of triples $(x_1,x_2,x_3)$, where 
  $x_1\in\mZ,\,x_2\in\mZ[\th]$ and $x_3=(\xi_{ij})\in\Mat(3,\mZ_2)$, such that $x_1\equiv \xi_{11}\!\pmod2$,
  $\xi_{12}\equiv\xi_{13}\equiv\xi_{21}\equiv\xi_{31}\equiv0\!\pmod 2$
  and $\rho(x_2)\equiv\smtr{\xi_{22}&\xi_{23}\\\xi_{32}&\xi_{33}}\!\pmod 2$, where $\rho$ denotes the regular representation of
  $\mZ_2[\th]$: $\rho(u+v\th)=\smtr{u&-v\\v&u-v}$. We denote by $L_1,L_2,L_3$ the irreducible $\tA$-lattices belonging,
  respectively, to the components $\mZ_2,\,\mZ_2[\th]$ and $\Mat(3,\mZ_2)$, and by $P_1$ and $P_2$ the indecomposable
  projective $A^+$-lattices $P_i=A^+e_i$, where $e_1=\frac{1+\si+\si^2}{3}$ and $e_2=1-e_1$. Note that the only 
  indecomposable $A_2$-lattices that are not $A^+$-lattices are $B_i=A_2e_i$. They are \emph{bijective}, i.e. both
  projective and injective in the exact category $A_2\lat$.
   
  Recall \cite{rinrog}, that representations of a \Bk\ order are classified by representations of a \emph{weighted graph} $\Ga$ 
  in the sense of \cite{DR}. Namely, the vertices of $\Ga$ are in \oc\ with the simple components of semsimple algebras
  $\oA=A^+/\rad A^+$ and $\oA'=\tA/\rad\tA$. In our case
   \[
     \oA=(K^+/\rad K^+)*H\simeq\mF_2 H\simeq\mF_2\xx\mF_4
    \]
    and
   \[
    \oA'\simeq(\tK/\rad\tK)*H\simeq\mF_2*H\xx\Mat(3,\mF_2)\simeq\mF_2\xx\mF_4\xx\Mat(3,\mF_2).
   \]
  Hence, the corresponding graph $\Ga$ (with orientation) is of type $\tF_{41}$:
  \[
   \Ga: \quad  \vcenter{\xymatrix@R=.5ex{ & 1 \\ 1' \ar[ur]^\al \ar[dr]^{\ga_1} \\&3\\ 2' \ar[ur]^{\ga_2}_{1,2}\ar[dr]_\be\\ &2    }}
  \]
  Here $1,2,3$ correspond, respectively, to the components $\mF_2,\mF_4$ and $\Mat(3,\mF_2)$ of $\oA'$, while
  $1'$ and $2'$ correspond, respectively, to the components $\mF_2$ and $\mF_4$ of $\oA$. The weights of all arrows
  except $\ga_2$ are $(1,1)$, so we do note write them. In representations of $\Ga$ the arrows $\al,\ga_1$ correspond to
  matrices with entries from $\mF_2$, the arrows $\ga_2,\be$ correspond to matrices with entries from $\mF_4$.
  The vector dimension of a representation $M$ of $\Ga$ we denote by $\rpp{d_{1'}}{d_{2'}}{d_1}{d_3}{d_2}$. Recall that
  the $A^+$-lattice $M$ corresponding to a representation $V$ of this graph is the preimage in 
  $\tM=L_1^{d_1}\+L_3^{d_3}\+L_2^{d_2}$ of $\im\vi(V)$, where $\vi(V):V(1')\+V(2')\to V(1)\+V(3)\+V(2)$ is given by the 
  matrix
  \[
   \mtr{V(\al)&0\\V(\ga_1)&V(\ga_2)\\0&V(\be)}.
  \]
  and we identify $V(1)\+V(2)\+V(3)$ with $\tM/2\tM$.
  The Auslander-Reiten quiver of the category of representations of graph $\Ga$ consists of preprojective, preinjective
  and regular components. When we pass to representations of the \Bk\ order, we have to glue the preprojective and
  preinjective components into one component (we call it \emph{principal}) \cite{rog}. Namely, we omit simple injective 
  and simple projective modules and then add arrows from the remaining injective to the remaining projective modules. 
  As a result, the principal component for the order $A^+$ becomes:
  {\small
 \[
  \xymatrix@C=.65em@R=.7em{& L^2_1 \ar[dr] &&  L^1_1 \ar[dr] && L_1\ar[dr] \ar@{.}[dd]&&  L_1^{-1} \ar[dr] && L_1^{-2}\ar[dr]  \\
 *+{\dots\ \ } auto.\ar[ur]\ar[dr] && P^2_1 \ar[dr] \ar[ur] &&  P^1_1 \ar[dr] \ar[ur] &&  P_1 \ar[dr] \ar[ur] &&  P^{-1}_1 \ar[dr] \ar[ur] 
 && *+{\ \ \dots}\\
  & L^2_3 \ar[dr] \ar[ur] &&  L^1_3 \ar[dr] \ar[ur] && L_3\ar[dr] \ar[ur] \ar@{.}[dd] &&  L_3^{-1} \ar[dr] \ar[ur] && L_3^{-2}\ar[dr] \ar[ur] \\
 *+{\dots\ \ }\ar[ur]\ar[dr] && P^2_2 \ar[dr] \ar[ur] &&  P^1_2 \ar[dr] \ar[ur] &&  P_2 \ar[dr] \ar[ur] &&  {P^{-1}_2} \ar[dr] \ar[ur]
  && *+{\ \ \dots}ahlength\\  
 & L^2_2 \ar[ur] &&  L^1_2 \ar[ur] && L_2\ar[ur] &&  {L_2^{-1}} \ar[ur] && {L_2^{-2}} \ar[ur]  }
 \]}%
  Here by $M^k$ we denote the $k$-th Auslander-Reiten transform $\tau^kM$ of the lattice $M$ over the ring $A_2$. 
 As shown in \cite{rejection}, $M^k\simeq\Om^kM$, the $k$-th syzyzy of $M$ as of $A_2$-module, and $P_i^1$ are just the 
 \emph{injective $A^+$-lattices}, i.e. those dual to projective ones, or, the same, injective in the exact category $A^+\lat$. 
 Actually, $P_i$ is the unique minimal overmodule and $P_i^1$ is the unique maximal submodule of $B_i=A_2e_i$. Recall 
 that $B_i$ are the only indecomposable $A_2$-lattices which are not $A^+$-lattices. 
 They are \emph{bijective}, i.e. both projective and injective in $A_2\lat$.
 Note also that the other irreducible $A^+$-lattices are $L^{\pm1}_1$ (just as $L_3$ they belong to the component $\Mat(3,\mZ)$).
 
  The dimensions of the corresponding representations of $\Ga$ are given in the next diagram.
     {\small
 \[
  \xymatrix@C=.65em@R=.7em{& \rpp11011 \ar[dr] &&  \rpp01010 \ar[dr] && \rpp10100  \ar@{.}[dd] \ar[dr] &&  \rpp10010 \ar[dr]
   && \rpp01011 \ar[dr]  \\
 *+{\dots\ \ } \ar[ur]\ar[dr] && \rpp12021 \ar[dr] \ar[ur] &&  \rpp11110 \ar[dr] \ar[ur] &&  \rpp10110 \ar[dr] \ar[ur] && \rpp11021 \ar[dr] \ar[ur] 
 && *+{\ \ \dots}\\
  &\rpp33141 \ar[dr] \ar[ur] &&  \rpp22121 \ar[dr] \ar[ur] && \rpp11010  \ar@{.}[dd] \ar[dr] \ar[ur] &&  \rpp11121 \ar[dr] \ar[ur] && 
  \rpp22141 \ar[dr] \ar[ur] \\
 *+{\dots\ \ }\ar[ur]\ar[dr] && \rpp43241 \ar[dr] \ar[ur] && \rpp22021 \ar[dr] \ar[ur] && \rpp01021 \ar[dr] \ar[ur] && \rpp22241 \ar[dr] \ar[ur]
  && *+{\ \ \dots}\\  
 & \rpp22221 \ar[ur] &&  \rpp21020 \ar[ur] && \rpp01001 \ar[ur] && \rpp01020 \ar[ur] &&  \rpp21221 \ar[ur]  }
 \]}%
 Note that the symmetry with respect to the central (dotted) column corresponds to the duality 
 $M\mps M^*=\Hom_{\mZ_2}(M,\mZ_2)$ in the category of $A_2$-lattices.
 
 The regular components for $A^+$-lattices are the same as those for the graph $\Ga$. They consist of \emph{homogeneous
 tubes} $\kT^f$ corresponding to monic irreducible polynomials from $\mF_2[t]$, except $t-1$ and $t^2+t+1$, and two \emph{special
 tubes} $\kT^1$ and $\kT^\th$. The homogeneous tubes are of the form
 \[
   \xymatrix@1{ T^f_1 \ar@/^1ex/[r] \ar@/_1ex/[r] & T^f_2 \ar@/^1ex/[r] \ar@/_1ex/[r] &  T^f_3 \ar@/^1ex/[r] \ar@/_1ex/[r] 
   & *+{\ \ \dots}}.
 \]
  In these components the Auslander-Reiten transform (or, the same, the syzygy) acts trivially.  
 The dimension of the representation of $\Ga$ corresponding to $T^f_k$ is $\rpp{2kd}{2kd}{kd}{3kd}{kd}$, 
 where $d=\deg f$. 
 
 The special tubes are of the forms:
  \begin{align*}
   & \vcenter{ \xymatrix@R=1em{ T^1_{11} \ar[r] & T^1_{12} \ar[r]\ar[dl] & T^1_{13} \ar[r]\ar[dl] & *+{\ \dots} \ar[dl] \\
     				 T^1_{21} \ar[r] & T^1_{22} \ar[r]\ar[ul] & T^1_{23} \ar[r]\ar[ul] & *+{\ \dots} \ar[ul] }} \\
   \intertext{and}
   & \vcenter{ \xymatrix@R=1em{ T^\th_{11} \ar[r] & T^\th_{12} \ar[r]\ar[dl] & T^\th_{13} \ar[r]\ar[dl]  & *+{\ \dots} \ar[dl]  \\ 
   									T^\th_{21} \ar[r] & T^\th_{22} \ar[r]\ar[dl] & T^\th_{13} \ar[r]\ar[dl]  & *+{\ \dots} \ar[dl]  \\
   									T^\th_{21} \ar[r] & T^\th_{22} \ar[r]\ar@/_2pt/[uul] & T^\th_{13} \ar[r]\ar@/_2pt/[uul]  & *+{\ \dots} \ar@/_2pt/[uul]  } }  				 
  \end{align*} 
  The dimensions of the corresponding representations of $\Ga$ are: \label{tub} 
  \begin{align*}
  &  \rpp{2k}{2k}{k}{3k}{k} &&\hspace*{-7em} \text{for } T^1_{i,2k}\ \,(k>0)\\
  &  \rpp{1}{1}{1}{1}{1}+\rpp{2k}{2k}{k}{3k}{k} &&\hspace*{-7em} \text{for }  T^1_{1,2k+1}\\
  &  \rpp{1}{1}{0}{2}{1}+\rpp{2k}{2k}{k}{3k}{k} &&\hspace*{-7em} \text{for }  T^1_{2,2k+1}\\
  &  \rpp{4k}{4k}{2k}{6k}{2k} &&\hspace*{-7em} \text{for }  T^\th_{i,3k}\ \,(k>0)\\
  &  \rpp{0}{2}{0}{2}{1}+\rpp{4k}{4k}{2k}{6k}{2k} &&\hspace*{-7em} \text{for }  T^\th_{1,3k+1}\\  
  &  \rpp{2}{1}{2}{2}{0}+\rpp{4k}{4k}{2k}{6k}{2k} &&\hspace*{-7em} \text{for }  T^\th_{2,3k+1}\\  
  &  \rpp{2}{1}{0}{2}{1}+\rpp{4k}{4k}{2k}{6k}{2k} &&\hspace*{-7em} \text{for }  T^\th_{3,3k+1}\\  
  &  \rpp{2}{3}{2}{4}{1}+\rpp{4k}{4k}{2k}{6k}{2k} &&\hspace*{-7em} \text{for } T^\th_{1,3k+2}\\  
  &  \rpp{4}{2}{2}{4}{1}+\rpp{4k}{4k}{2k}{6k}{2k} &&\hspace*{-7em} \text{for }  T^\th_{2,3k+2}\\  
  &  \rpp{2}{3}{0}{4}{2}+\rpp{4k}{4k}{2k}{6k}{2k} &&\hspace*{-7em} \text{for }  T^\th_{3,3k+2}.  
  \end{align*}
  The Auslander-Reiten transform (or, the same, syzyzy) acts as follows:
   \begin{align*}
    &  \xymatrix@1{ T^1_{1k} \ar@/^1ex/[rr] && T^1_{2k} \ar@/^1ex/[ll] } \\
    &  \xymatrix@1{ T^\th_{1k} \ar[rr] && T^\th_{2k} \ar[rr] && T^\th_{3k} \ar@/^2ex/[llll].  }
   \end{align*}

\smallskip
\section{Globalization}
\label{glob} 

 To describe indecomposable $A$-lattices, we use the following results of \cite{F-Gen}.
 
  \begin{prop}\label{gl-1} 
 Let $M(p)$ be $A_p$-lattices given for all prime $p$.
 \begin{enumerate}
 \item  There is an $A$-lattice $M$ such that $M_p\simeq M(p)$ for all $p$  \iff there is a $QA$-module
 $V$ such that $QM(p)\simeq\mQ_p\*_\mQ V$ for all $p$. Then we say that all $M(p)$ are \emph{of the
 same rational type}.
 \item  Such lattice $M$ is decomposable  \iff there are direct summands $N(p)$ of every $M(p)$ such
 that all $N(p)$ are of the same rational type. In particular, if $M'$ is another lattice with the same
 localizations, $M$ and $M'$ decomposes simultaneously.
 \end{enumerate}
 \end{prop}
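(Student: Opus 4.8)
The plan is to treat the two directions of each part through the standard local--global dictionary for full lattices in a fixed rational module, the only serious input being a genus-invariance statement for decompositions.

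For part (1) one direction is immediate: if such an $M$ exists, put $V=QM$; then $QM(p)\simeq Q(M_p)\simeq\mQ_p\*_\mQ V$ for every $p$, so all $M(p)$ are of the same rational type. For the converse I would first produce any full $A$-lattice $M_0$ in $V$ (for instance the $A$-submodule generated by a $\mQ$-basis of $V$), so that $QM_0\simeq V$. For every prime $p$ not dividing $|G|$ the completion $A_p=\mZ_pG$ is a maximal order in the semisimple algebra $QA_p$, and over such an order a lattice is determined up to isomorphism by its rational type; hence $(M_0)_p\simeq M(p)$ for all $p$ outside the finite set $S=\{p:p\mid|G|\}$. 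It then remains only to correct $M_0$ at the finitely many primes of $S$. Using the chosen isomorphisms $QM(p)\simeq V_p:=V\*_\mQ\mQ_p$ I regard each $M(p)$ as a full $A_p$-lattice inside $V_p$, and set $L_p=M(p)$ for $p\in S$ and $L_p=(M_0)_p$ otherwise; since $L_p=(M_0)_p$ for almost all $p$, the intersection $M=\{x\in V:x\in L_p\text{ for all }p\}$ is a full $A$-lattice in $V$ with $M_p=L_p$ for every $p$, whence $M_p\simeq M(p)$ throughout.

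Part (1) shows that ``of the same rational type'' is exactly the obstruction to gluing, and for part (2) the task is to read off decomposability from the same local data. One direction is again easy: if $M\simeq M'\+M''$ with both summands nonzero, then $N(p):=M'_p$ is a direct summand of $M_p\simeq M(p)$ and all the $N(p)$ share the rational type $QM'$. Conversely, suppose each $M(p)=N(p)\+N'(p)$ with $\{N(p)\}$ of one rational type $V'$ and $\{N'(p)\}$ of another rational type $V''$, where $V=V'\+V''$ and $V',V''\neq0$. Applying part (1) to the families $\{N(p)\}$ and $\{N'(p)\}$ yields $A$-lattices $M'$ and $M''$ with $M'_p\simeq N(p)$ and $M''_p\simeq N'(p)$; then $M'\+M''$ has the same localizations as $M$, i.e. lies in the same genus.

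The main obstacle is the final step: passing from ``the genus of $M$ contains the decomposable lattice $M'\+M''$'' to ``$M$ itself is decomposable,'' since lattices in the same genus need not be isomorphic. The cleanest route is through idempotents: after transporting each decomposition $M_p\simeq N(p)\+N'(p)$ into $V_p$ one obtains, for every $p$, an idempotent of $\End_{A_p}(M_p)$ with the prescribed rational image, and the content of the assertion is that these can be chosen compatibly so as to descend to a single idempotent of $\End_A(M)=\bigcap_p\End_{A_p}(M_p)$, which then splits $M$. Establishing this compatibility---equivalently, proving that decomposability is a genus invariant, so that $M$ and any $M'$ with the same localizations decompose simultaneously---is the crux; it rests on Krull--Schmidt for $A_p$-lattices together with a Roiter-type substitution argument transporting a splitting across the genus, and is precisely the input supplied by the cited globalization results.
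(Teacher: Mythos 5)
The paper gives no proof of this proposition at all --- it is quoted verbatim from Faddeev's paper \cite{F-Gen} --- so there is no internal argument to compare yours with; I can only judge the proposal on its own terms. Your part (1) is complete and correct: the forward direction is trivial, and the converse is the standard local--global patching argument (take any full lattice $M_0$ in $V$, observe that $A_p$ is maximal for $p\nmid|G|$ so that rational type determines the lattice there, and glue the finitely many prescribed local lattices by the intersection $M=\bigcap_p(V\cap L_p)$, which satisfies $M_p=L_p$ since $L_p=(M_0)_p$ for almost all $p$). The easy direction of part (2) is also fine.

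The gap is in the converse of part (2). You correctly reduce it to the assertion that decomposability, with prescribed rational types of the summands, is a genus invariant, and you correctly flag this as the crux --- but you then dispose of it by saying it ``is precisely the input supplied by the cited globalization results.'' That is circular: Proposition~\ref{gl-1} \emph{is} the cited globalization result, and its part (2) is exactly the statement that a lattice in the genus of a decomposable lattice is decomposable. Your sketch of an idempotent-descent argument does not go through as stated either: having an idempotent $e_p\in\End_{A_p}(M_p)$ for each $p$ does not by itself produce an idempotent of $\End_A(M)$, since the local idempotents need not be the localizations of a single global endomorphism. The standard way to close the gap (Faddeev's, reproduced in Curtis--Reiner) is via Roiter's replacement lemma: if $M$ and $N'\+N''$ lie in the same genus, one embeds $N'\+N''$ into $M$ with finite cokernel of order prime to $|G|$ and then runs an exchange argument to split $M$ as $M'\+M''$ with $M'$ in the genus of $N'$ and $M''$ in the genus of $N''$. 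Alternatively, \emph{within this particular paper} you could finish by invoking Proposition~\ref{gl-2} (for $A=\mZ\dA_4$ genus implies isomorphism, so $M\simeq M'\+M''$ outright), but that uses a special feature of this order and does not prove the general statement as Faddeev formulates it; if you take that route you should say so explicitly rather than gesture at ``the cited results.''
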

 
 They say that two $A$-lattices $M$ and $M'$ are \emph{of the same genus} if $M_p\simeq M'_p$ for
 all $p$. As $A\sb 6\tA$, the following result follows from \cite[Thm.\,3.7]{adeles}.
 
  \begin{prop}\label{gl-2} 
  If two $A$-lattices belong to the same genus, they are isomorphic.
  \end{prop}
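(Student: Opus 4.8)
The plan is to deduce the statement directly from \cite[Thm.\,3.7]{adeles} by checking its hypotheses for $A=\mZ\dA_4$. Morally, that theorem says that when a $\mZ$-order $A$ sits inside a hereditary (or maximal) overorder $\tA$ with a controlled conductor, membership in a single genus already forces isomorphism, provided $QA$ satisfies the Eichler condition and the pertinent idele/ray class group of the center $Z(QA)$ vanishes. Since two lattices in the same genus share rational type, the comparison takes place inside a fixed $QA$-module, so the whole proof reduces to verifying three points: the Eichler condition on $QA$, the conductor bound $6\tA\sbe A\sbe\tA$ (the relation between $A$ and $\tA$ recorded before the statement), and the triviality of the class-group obstruction. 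Proposition \ref{gl-2} then complements Proposition \ref{gl-1}: together they say that genus data plus rational type classify $A$-lattices exactly.

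First I would record the Eichler condition. From Section \ref{2ad} we have $QA\simeq\mQ\xx\mQ[\th]\xx\Mat(3,\mQ)$, whose simple components are the number fields $\mQ$ and $\mQ[\th]=\mQ(\sqrt{-3})$ together with the split algebra $\Mat(3,\mQ)$. None of these is a totally definite quaternion algebra, so $QA$ satisfies Eichler's condition. Consequently Jacobinski's machinery applies and the number of isomorphism classes in any genus equals the order of a fixed quotient of a ray class group of $Z(QA)$, independent of the lattice chosen.

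Next I would check the conductor bound. For $p\notin\{2,3\}$ the order $A_p$ is already maximal, so there is nothing to verify there. At the two bad primes one reads off the containment from the explicit local pictures: using $A_2=K_2*H$, the overorder $A^+$, and $\tA\simeq\mZ_2\+\mZ_2[\th]\xx\Mat(3,\mZ_2)$ at $p=2$, together with $A_3\simeq\mZ_3H\xx\Mat(3,\mZ_3)$ at $p=3$, the congruences cutting $A$ out of $\tA$ are all modulo divisors of $6$. This yields $6\tA\sbe A\sbe\tA$, which is precisely the input the theorem needs.

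Finally comes the class-group input, and this is the step I expect to be the real obstacle. The centers of the simple components are $\mQ$, $\mQ(\sqrt{-3})$, and $\mQ$ (the center of $\Mat(3,\mQ)$), all of class number one; by Morita invariance the matrix component contributes nothing beyond $\mathrm{Cl}(\mZ)=1$. Granting this, and combining with the conductor bound, the ray class group attached to $A$ in \cite[Thm.\,3.7]{adeles} collapses, so each genus is a single isomorphism class. The delicate part is not the class-number computation itself but confirming that the global units of $\tA$ surject onto the local unit groups modulo the conductor at $2$ and $3$ — an Eichler/strong-approximation point — so that the relevant quotient is genuinely \emph{trivial} rather than merely finite; it is exactly the class-number-one facts above that make this surjectivity succeed.
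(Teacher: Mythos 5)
Your proposal follows exactly the route the paper takes: the paper's entire argument is the one-line observation that $6\tA\sbe A\sbe\tA$ together with the citation of \cite[Thm.\,3.7]{adeles}, and you simply make explicit the hypothesis-checking (Eichler condition on $QA\simeq\mQ\xx\mQ[\th]\xx\Mat(3,\mQ)$, the conductor bound at $p=2,3$, and the triviality of the class-group obstruction via the class-number-one centers $\mQ$ and $\mQ(\sqrt{-3})$). This is correct and is essentially the same proof, just with the verification spelled out rather than left implicit.
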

 
 Note that if $p\notin\{2,3\}$, for every $QA$-module $V$ there is a unique $A_p$-lattice $L$ such that
 $QL\simeq\mQ_p\*_\mQ V$. Therefore, an $A$-lattice is completely defined by its $2$-adic and
 $3$-adic localizations. If $p\in\{2,3\}$, every $QA_p$-module is of the form $\mQ_p\*_\mQ V$, where 
 $V$ is a $QA$-module. $V$ decomposes as $V\simeq\mQ^{r_1}\+\mQ[\th]^{r_2}\+W^{r_3}$, 
 where $W$ is the unique simple $\Mat(3,\mQ)$-module. We write $\rat M_p=(r_1,r_2,r_3)$ and call 
 $\rat M_p$ the \emph{rational type} of $M_p$. Hence an $A$-lattice is defined by a pair
 $M_2,\,M_3$ of lattices over $A_2$ and $A_3$ which are of the same rational type. 

 Note that the unique indecomposable $A_3$-lattice which is not irreducible is the lattice $\La=\mZ_3H$.
 The rational type of $\La$ is $(1,1,0)$. From now on, let $M$ be an $A$-lattice of rational type 
 $(d_1,d_2,d_3)$ and $M_3=\mZ_3^{k_1}\+\mZ_3[\th]^{k_2}\+L_3^{k_3}\+\La^k$, where $L_3$ is the 
 irreducible $\Mat(3,\mZ_3)$-lattice. Note that the dimension of the corresponding representation 
 of the valued graph $\Ga$ is $\rpp{d_{1'}}{d_{2'}}{d_1}{d_3}{d_2}$ for some $d_{1'},d_{2'}$. 
 Proposition~\ref{gl-1} means that $\,k_1+k=d_1,\,k_2+k=d_2$ and $k_3=d_3$. 
 It implies a description of $A$-lattices $M$ such that $M_2$ is indecomposable.
 
  \begin{thm}\label{p0} 
  Let $N$ be an indecomposable $A_2$-lattice, $\rat N=(c_1,c_2,c_3)$ and $\tc=\min(c_1,c_2)$.
  Denote by $N^k\ (0\le k\le \tc)$ the $A$-lattice such that $N^k_2\simeq N$ and 
  $N^k_3\simeq \La^k\+\mZ_3^{c_1-k}\+\mZ_3[\th]^{c_2-k}+L_3^{c_3}$. Every $A$-lattice $M$
  such that $M_2\simeq N$ is isomorphic to one of $N^k$.
  \end{thm}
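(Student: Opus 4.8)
The plan is to reduce everything to the two cited globalization results, Propositions~\ref{gl-1} and~\ref{gl-2}, together with the explicit list of indecomposable $A_3$-lattices. The key observation is that the rational type is a \emph{global} invariant: for any $A$-lattice $M$ the single $QA$-module $V=QM$ satisfies $QM_p\simeq\mQ_p\otimes_\mQ V$ for every $p$, so all localizations $M_p$ share one rational type, namely that of $V$. Hence if $M_2\simeq N$ then $\rat M_3=\rat M_2=\rat N=(c_1,c_2,c_3)$. Moreover, for $p\notin\{2,3\}$ the lattice $M_p$ is the unique $A_p$-lattice of this rational type, so $M$ is determined up to genus by the single datum $M_3$.

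First I would classify the $A_3$-lattices of rational type $(c_1,c_2,c_3)$. By Krull--Schmidt every such lattice is a direct sum $\mZ_3^{a}\+\mZ_3[\th]^{b}\+L_3^{c}\+\La^{k}$ of indecomposables, and reading off rational types (the lattices $\mZ_3,\mZ_3[\th],L_3,\La$ have types $(1,0,0),(0,1,0),(0,0,1),(1,1,0)$) forces $a=c_1-k$, $b=c_2-k$, $c=c_3$. Nonnegativity of $a,b$ gives exactly $0\le k\le\tc=\min(c_1,c_2)$, and each $k$ in this range yields precisely the lattice $N^k_3$ of the statement. Thus $M_3\simeq N^k_3$ for a unique such $k$.

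Next I would establish that each $N^k$ exists. Applying Proposition~\ref{gl-1}(1) with $V=\mQ^{c_1}\+\mQ[\th]^{c_2}\+W^{c_3}$, one checks $\mQ_2\otimes_\mQ V\simeq QN$ and $\mQ_3\otimes_\mQ V\simeq QN^k_3$, both holding because the three lattices $N$, $N^k_3$ and $V$ all have rational type $(c_1,c_2,c_3)$; the localizations at the remaining primes are then forced, producing an $A$-lattice $N^k$ with the prescribed $2$- and $3$-adic parts. Finally, for an arbitrary $M$ with $M_2\simeq N$, the previous paragraph gives $M_3\simeq N^k_3$ for some $k$, so $M$ and $N^k$ have isomorphic localizations at every prime, i.e.\ belong to the same genus; Proposition~\ref{gl-2} then yields $M\simeq N^k$, completing the classification (and the $N^k$ are pairwise non-isomorphic, as they already differ at $p=3$).

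The argument is short because almost all the heavy lifting is supplied by the cited theorems; the only genuine point requiring care is the bookkeeping that makes the rational types at $2$ and $3$ agree automatically. This is exactly what pins the parameter $k$ to the interval $[0,\tc]$: it counts the copies of the glued lattice $\La=\mZ_3H$, the sole indecomposable $A_3$-lattice mixing the $(1,0,0)$ and $(0,1,0)$ components, and its multiplicity is the unique remaining degree of freedom once $M_2$ is fixed.
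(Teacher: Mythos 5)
Your proof is correct and follows essentially the same route as the paper: the theorem is presented there as an immediate consequence of Propositions~\ref{gl-1} and~\ref{gl-2} together with the bookkeeping $k_1+k=d_1$, $k_2+k=d_2$, $k_3=d_3$ for $M_3=\mZ_3^{k_1}\+\mZ_3[\th]^{k_2}\+L_3^{k_3}\+\La^k$, which is exactly your classification of the $3$-adic part and the constraint $0\le k\le\tc$.
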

 
 Let now $M_2\simeq \bop_{i=1}^s N^i$, where $s>1$, $\rat N^i=(c_{1i},c_{2i},c_{3i})$ and 
 $\tc_i=\min(c_{1i},c_{2i})$. The following result is obvious.
 
  \begin{prop}\label{p1} 
  If $k\le\sum_{i=1}^s\tc_i$, then $M$ decomposes as $\bop_{i=1}^s M^i$, where $M^i_2\simeq N^i$
  and $M^i_3\simeq \La^k\+\mZ_3^{c_{1i}-b_i}\+\mZ_3[\th]^{c_{2i}-b_i}\+L_3^{c_{3i}}$, where $b_i$
  are arbitrary integers such that $b_i\le \tc_i$ and $\sum_{i=1}^sb_i=k$.
  \end{prop}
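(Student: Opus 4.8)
The plan is to construct the summands one at a time using Theorem~\ref{p0} and then to recognize their direct sum as $M$ via the genus-rigidity of Proposition~\ref{gl-2}. First I would record the rational types: since $M_2\simeq\bop_{i=1}^s N^i$ and rational type is additive over direct sums, $\rat M=\rat M_2=(\sum_i c_{1i},\sum_i c_{2i},\sum_i c_{3i})=(d_1,d_2,d_3)$. By the discussion preceding the statement this forces $M_3\simeq\La^k\+\mZ_3^{d_1-k}\+\mZ_3[\th]^{d_2-k}\+L_3^{d_3}$ with $k_1=d_1-k$, $k_2=d_2-k$, $k_3=d_3$, and in particular $d_j=\sum_i c_{ji}$.

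Next I would fix integers $b_i$ with $0\le b_i\le\tc_i$ and $\sum_{i=1}^s b_i=k$; such a choice exists precisely because $0\le k\le\sum_{i=1}^s\tc_i$, which is exactly the hypothesis. For each $i$, Theorem~\ref{p0} applied to the indecomposable $A_2$-lattice $N^i$, with $\rat N^i=(c_{1i},c_{2i},c_{3i})$ and the admissible value $b_i\le\tc_i$, produces an $A$-lattice $M^i$ with $M^i_2\simeq N^i$ and $M^i_3\simeq\La^{b_i}\+\mZ_3^{c_{1i}-b_i}\+\mZ_3[\th]^{c_{2i}-b_i}\+L_3^{c_{3i}}$.

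Then I would assemble $M'=\bop_{i=1}^s M^i$ and compare localizations. At $p=2$ one gets $M'_2\simeq\bop_i N^i\simeq M_2$. At $p=3$, summing the pieces and using $\sum_i b_i=k$ together with $\sum_i(c_{1i}-b_i)=d_1-k$, $\sum_i(c_{2i}-b_i)=d_2-k$ and $\sum_i c_{3i}=d_3$ gives $M'_3\simeq\La^k\+\mZ_3^{d_1-k}\+\mZ_3[\th]^{d_2-k}\+L_3^{d_3}\simeq M_3$. For $p\notin\{2,3\}$ both $M'_p$ and $M_p$ are the unique $A_p$-lattice attached to the common rational type $(d_1,d_2,d_3)$, as recorded after Proposition~\ref{gl-2}. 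Hence $M'$ and $M$ lie in the same genus, and Proposition~\ref{gl-2} yields $M\simeq M'=\bop_i M^i$, the asserted decomposition.

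There is essentially no hard step here, which is why the statement is labelled obvious. The only points demanding care are verifying that the constraint $k\le\sum_i\tc_i$ is exactly what guarantees an admissible vector $(b_i)$ lying in the ranges $0\le b_i\le\tc_i$ required by Theorem~\ref{p0}, and that the $3$-adic pieces reassemble to $M_3$ on the nose. The freedom in the choice of the $b_i$ reflects the genuine non-uniqueness of the decomposition, so no further argument is needed to realize any prescribed partition of $k$.
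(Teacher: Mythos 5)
Your proof is correct and is exactly the argument the paper has in mind when it declares the result ``obvious'': build each $M^i=(N^i)^{b_i}$ via Theorem~\ref{p0} (whose existence part rests on Proposition~\ref{gl-1}(1)), check that all localizations of $\bop_i M^i$ agree with those of $M$, and conclude by the genus rigidity of Proposition~\ref{gl-2}. You also silently and correctly repair the statement's typo, reading $\La^{b_i}$ rather than $\La^{k}$ in the description of $M^i_3$.
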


  Thus from now on we suppose that $k>\sum_{i=1}^s\tc_i$. 
  
   \begin{prop}\label{p2} 
   If $N$ is a direct summand of $M_2$ of rational type $(c,c,c_3)$, then $M$ has a direct summand $M'$ 
   such that $M'_2\simeq N$ and $M'_3\simeq \La^c\+L_3^{c_3}$. 
   \end{prop}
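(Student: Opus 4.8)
The plan is to use the two globalization propositions established earlier, together with the explicit rational-type bookkeeping on the $3$-adic side. The statement concerns a direct summand $N$ of $M_2$ whose rational type is \emph{balanced}, i.e.\ $c_1=c_2=c$, so that $\tc=\min(c_1,c_2)=c$. Since Proposition~\ref{gl-1} tells us that decomposability of $M$ is controlled entirely by the existence of compatible direct-sum decompositions at each prime that are of the same rational type, the goal reduces to producing a $3$-adic summand of the correct rational type to pair with $N$.

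First I would set up the matching at $p=3$. By Proposition~\ref{gl-1}(2), to split off a summand $M'$ of $M$ with $M'_2\simeq N$, it suffices to exhibit a direct summand $N(3)$ of $M_3$ that is of the same rational type as $N$, namely rational type $(c,c,c_3)$. The lattice $\La=\mZ_3H$ has rational type $(1,1,0)$ and the irreducible $\Mat(3,\mZ_3)$-lattice $L_3$ has rational type $(0,0,1)$, so the candidate $N(3)=\La^{c}\+L_3^{c_3}$ has rational type $c\cdot(1,1,0)+c_3\cdot(0,0,1)=(c,c,c_3)$, exactly matching $\rat N$. Thus once I know $\La^{c}\+L_3^{c_3}$ is genuinely a direct summand of $M_3$, Proposition~\ref{gl-1} immediately yields the desired $A$-lattice $M'$ with $M'_2\simeq N$ and $M'_3\simeq\La^{c}\+L_3^{c_3}$, and Proposition~\ref{gl-2} (same genus $\Rightarrow$ isomorphic) pins down $M'$ up to isomorphism.

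The real content, and the step I expect to be the main obstacle, is verifying that $M_3$ actually contains $\La^{c}\+L_3^{c_3}$ as a direct summand under the running hypothesis $k>\sum_i\tc_i$. Here I would use the decomposition $M_3\simeq\mZ_3^{k_1}\+\mZ_3[\th]^{k_2}\+L_3^{k_3}\+\La^{k}$ recorded in the text, where the constraints from Proposition~\ref{gl-1} read $k_1+k=d_1$, $k_2+k=d_2$, $k_3=d_3$. The point is that the number $k$ of copies of $\La$ is forced to be large: summing rational types over the summands $N^i$ gives $d_1=\sum_i c_{1i}$ and $d_2=\sum_i c_{2i}$, so $k=d_1-k_1$ and $k=d_2-k_2$, and the hypothesis $k>\sum_i\tc_i=\sum_i\min(c_{1i},c_{2i})$ guarantees there are enough $\La$-summands available to allocate $c$ of them to the balanced summand $N$ while keeping the remaining $3$-adic summands of an admissible rational type. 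Concretely, I would check that after removing $\La^{c}\+L_3^{c_3}$ from $M_3$, the residual lattice still has nonnegative multiplicities $k_1,k_2,k_3-c_3,k-c\ge0$, which follows because $k>\sum_i\tc_i\ge \tc=c$ forces $k\ge c$, and $k_3=d_3\ge c_3$ since $N$ is a summand of $M_2$. This confirms $\La^{c}\+L_3^{c_3}$ splits off at $3$.

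Finally I would assemble the pieces: the $2$-adic summand $N$ of $M_2$ and the $3$-adic summand $\La^{c}\+L_3^{c_3}$ of $M_3$ are of the same rational type $(c,c,c_3)$, so by Proposition~\ref{gl-1}(2) the complementary summands are also of the same rational type, and $M$ decomposes as $M'\+M''$ with $M'_2\simeq N$, $M'_3\simeq\La^{c}\+L_3^{c_3}$. The lattice $M'$ is determined up to isomorphism by its localizations via Proposition~\ref{gl-2}. The only subtlety to watch is the edge case where $N$ exhausts all available $\La$-summands or where several balanced summands compete for the same $\La$'s; the hypothesis $k>\sum_i\tc_i$ is precisely what rules out such a shortage, so I would make sure the inequality is invoked correctly rather than merely $k\ge c$.
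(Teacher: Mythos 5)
Your strategy is the right one and is exactly what the paper leaves implicit (Proposition~\ref{p2} is stated without proof): by Propositions~\ref{gl-1} and~\ref{gl-2} everything reduces to checking that $\La^c\+L_3^{c_3}$ splits off $M_3$, i.e.\ that $k\ge c$ and $k_3\ge c_3$. The second inequality is fine. But the inequality you invoke for the first, $\sum_i\tc_i\ge\tc=c$, is backwards. Write $N=\bigoplus_{i\in S}N^i$; since $\min$ is superadditive, $\sum_{i\in S}\tc_i\le\min\bigl(\sum_{i\in S}c_{1i},\sum_{i\in S}c_{2i}\bigr)=c$, and in fact $\sum_{i\in S}\tc_i=c-\frac12\sum_{i\in S}|c_{1i}-c_{2i}|$, with equality to $c$ only when every constituent $N^i$, $i\in S$, is itself balanced. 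The running hypothesis $k>\sum_{i=1}^s\tc_i$ therefore only gives $k\ge c+1-\frac12\sum_{i\in S}|c_{1i}-c_{2i}|+\sum_{i\notin S}\tc_i$, which need not reach $c$.

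The argument does close when $N$ is an indecomposable summand $N^i$ (then $\tc_i=\min(c,c)=c$, so $k>\sum_j\tc_j\ge c$), which is the case needed for the conclusion the paper draws immediately after the proposition, and also when $N=N^i\+N^j$ with $|c_{1i}-c_{2i}|=|c_{2j}-c_{1j}|=1$ (then $\sum_l\tc_l\ge c-1$). But for general decomposable $N$ the statement itself fails: take $M=N_1\bt N_2$ as in Theorem~\ref{p5}(1) with $N_1\in\dN^2$ and $N_2\in\dN_2$ (for instance $N_1=T^\th_{2,1}$ of rational type $(2,0,2)$ and $N_2=T^\th_{3,2}$ of rational type $(0,2,4)$). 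Then $N=M_2$ is a direct summand of balanced type with $c=\tc_1+\tc_2+2$, while $k=\tc_1+\tc_2+1<c$; any $M'$ with $M'_2\simeq M_2$ would have to be all of $M$, yet $\La^{c}$ is not a summand of $M_3$. So your proof is valid precisely on the range where the proposition is valid, but the inequality as written would equally "prove" the false cases; you need to restrict $N$ (to an indecomposable summand, or otherwise control $\sum_{i\in S}|c_{1i}-c_{2i}|$ against $\sum_{i\notin S}\tc_i$) before the allocation of the $\La$'s goes through.
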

   
   Therefore, if $M$ is indecomposable, $M_2$ has no proper direct summands of rational type $(c,c,c_3)$.
   In what follows we always suppose that this condition is satisfied.   
  
  \begin{prop}\label{p3} 
  If $c_{1i}< c_{2i}$ for all $i$ or $c_{1i}>c_{2i}$ for all $i$, then $M$ decomposes.
   \end{prop}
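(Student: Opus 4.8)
The plan is to show that the uniform-sign hypothesis forces $k\le\sum_{i=1}^s\tc_i$, which places us back in the situation of Proposition~\ref{p1} and hence yields a decomposition of $M$.

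First I would record the numerical constraint coming from matching rational types. The lattice $M_3=\mZ_3^{k_1}\+\mZ_3[\th]^{k_2}\+L_3^{k_3}\+\La^k$ has rational type $(k_1+k,\,k_2+k,\,k_3)$, and this must coincide with the rational type $(d_1,d_2,d_3)$ of $M$, where $d_1=\sum_i c_{1i}$ and $d_2=\sum_i c_{2i}$. Since $k_1=d_1-k$ and $k_2=d_2-k$ are nonnegative, we obtain $k\le d_1$ and $k\le d_2$, hence $k\le\min(d_1,d_2)$.

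Next I would translate the sign hypothesis into a statement about $\sum_i\tc_i$. If $c_{1i}<c_{2i}$ for every $i$, then $\tc_i=\min(c_{1i},c_{2i})=c_{1i}$, so $\sum_i\tc_i=\sum_i c_{1i}=d_1$; combined with $k\le d_1$ from the previous step this gives $k\le\sum_i\tc_i$. The case $c_{1i}>c_{2i}$ for every $i$ is symmetric: there $\tc_i=c_{2i}$, so $\sum_i\tc_i=d_2\ge k$. In either case the hypothesis $k\le\sum_i\tc_i$ of Proposition~\ref{p1} is satisfied, and that proposition produces a decomposition $M\simeq\bop_{i=1}^s M^i$; thus $M$ decomposes.

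There is no real obstacle here: the entire content is the observation that a uniform sign of $c_{1i}-c_{2i}$ collapses $\sum_i\tc_i$ onto $d_1$ (resp.\ $d_2$), after which the nonnegativity of $k_1$ (resp.\ $k_2$) pins $k$ below that sum. The only point to keep in mind is that the inequalities in the hypothesis are strict, so $\tc_i$ is unambiguously one of the two coordinates; this is consistent with the running setup, since Proposition~\ref{p2} has already removed the summands with $c_{1i}=c_{2i}$. In particular the argument shows that the uniform-sign configurations are incompatible with the standing assumption $k>\sum_i\tc_i$, which is exactly why from here on one may assume that the differences $c_{1i}-c_{2i}$ take both signs.
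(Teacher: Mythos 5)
Your proof is correct, but it follows a genuinely different route from the paper's. The paper argues by exhibiting an explicit direct summand: it picks $N_1$ with $c_{11}$ minimal among the $c_{1i}$ and splits off, via Proposition~\ref{gl-1}(2), a lattice $M'$ with $M'_2\simeq N_1$ and $M'_3\simeq\La^{c_{11}}\+\mZ_3[\th]^{c_{21}-c_{11}}\+L_3^{c_{31}}$. You instead show that the uniform sign of $c_{1i}-c_{2i}$ collapses $\sum_i\tc_i$ onto $d_1$ (resp.\ $d_2$), so the constraint $k\le\min(d_1,d_2)$ coming from $k_1,k_2\ge0$ forces $k\le\sum_i\tc_i$, and Proposition~\ref{p1} then decomposes $M$. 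Your reduction buys two things: it avoids bookkeeping that the paper leaves implicit (for the paper's $M'_3$ to actually be a direct summand of $M_3$ one needs $k\ge c_{11}$, which does not follow from the hypotheses alone and would force a modified choice of $M'_3$ when $k<c_{11}$), and your closing observation --- that the uniform-sign case is outright incompatible with the standing assumption $k>\sum_i\tc_i$ --- pinpoints the actual role of the proposition in the paper's narrative, namely that after Propositions~\ref{p1} and~\ref{p2} one may assume the differences $c_{1i}-c_{2i}$ take both signs. What the paper's version buys in exchange is an explicit indecomposable summand with a named $3$-adic part, which is closer to the form in which this information is reused in Proposition~\ref{p4}.
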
 
    \begin{proof}
    Let $c_{1i}<c_{2i}$ for all $i$ and $c_{11}$ is the minimal among $c_{1i}$. Then $M$ has a direct
    summand $M'$ such that $M'_2\simeq N_1$ and 
    $M'_3\simeq \La^{c_{11}}\+\mZ_3[\th]^{c_{21}-c_{11}}\+L_3^{c_{13}}$.
    \end{proof}
    
    Propositions~\ref{p1}-\ref{p3} imply a description of indecomposable $A$-lattices $M$ such that $M_2$
    has two indecomposable components.
    
     \begin{prop}\label{p4} 
     If $M$ is indecomposable and $s=2$, then, up to permutation of $N_1$ and $N_2$, $c_{11}<c_{21}$ 
     and $c_{12}>c_{22}$. There are $c^+-\tc$ lattices, where $\tc=\tc_1+\tc_2$ and
     $c^+=\min(c_{11}+c_{12},c_{21}+c_{22})$ corresponding to decompositions
     $M_3\simeq\La^k\+\mZ_3^{c_{11}+c_{12}-k}\+\mZ_3[\th]^{c_{21}+c_{22}-k}\+L_3^{c_{31}+c_{32}}$,
     where $\tc<k\le c^+$.
     \end{prop}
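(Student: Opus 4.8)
The plan is to treat $M$ as determined by the pair $(M_2,M_3)$ via Propositions~\ref{gl-1} and~\ref{gl-2}, and to parametrise the admissible $M_3$ by the multiplicity $k$ of $\La$.

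First I would pin down the shape asserted in the first sentence. Since $M$ is indecomposable, the remark following Proposition~\ref{p2} forbids $M_2$ from having a summand of rational type $(c,c,c_3)$; applied to $N_1$ and $N_2$ this gives $c_{11}\ne c_{21}$ and $c_{12}\ne c_{22}$. Proposition~\ref{p3} then rules out the two cases where the inequalities between $c_{1i}$ and $c_{2i}$ point the same way for both $i$. Hence, after possibly interchanging $N_1$ and $N_2$, we have $c_{11}<c_{21}$ and $c_{12}>c_{22}$. In particular $\tc_1=c_{11}$, $\tc_2=c_{22}$, so $\tc=c_{11}+c_{22}$, and $c^+=\min(c_{11}+c_{12},c_{21}+c_{22})=\min(d_1,d_2)$, where $d_1=c_{11}+c_{12}$, $d_2=c_{21}+c_{22}$, $d_3=c_{31}+c_{32}$.

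Next I would describe all $A$-lattices with prescribed $M_2\simeq N_1\+N_2$. By Proposition~\ref{gl-1}(1) such a lattice exists exactly when $M_3$ has rational type $(d_1,d_2,d_3)$, and by Proposition~\ref{gl-2} it is then unique. The $A_3$-lattices of this rational type are precisely $M_3\simeq\La^k\+\mZ_3^{d_1-k}\+\mZ_3[\th]^{d_2-k}\+L_3^{d_3}$ with $0\le k\le\min(d_1,d_2)=c^+$, so these lattices are parametrised by $k$ in that range, and distinct $k$ give non-isomorphic $M_3$, hence non-isomorphic $M$, by the Krull--Schmidt theorem over $A_3$. It remains to decide for which $k$ the lattice $M$ is indecomposable; Proposition~\ref{p1} already shows that $k\le\tc$ forces a decomposition, giving the necessity of $k>\tc$.

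The main work is the converse, that every $k$ with $\tc<k\le c^+$ yields an indecomposable $M$, and I expect this to be the only genuine obstacle. Here I would invoke the decomposability criterion of Proposition~\ref{gl-1}(2). Since $N_1,N_2$ are indecomposable, a nontrivial splitting of $M$ would pair a summand $N(2)\in\{N_1,N_2\}$ of $M_2$ with a summand $N(3)$ of $M_3$ of the same rational type. A summand of $M_3$ with $j$ copies of $\La$, $a$ of $\mZ_3$, $b$ of $\mZ_3[\th]$ and $c$ of $L_3$ has rational type $(j+a,\,j+b,\,c)$, subject to $0\le j\le k$, $0\le a\le d_1-k$, $0\le b\le d_2-k$. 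Matching this with $\rat N_1=(c_{11},c_{21},c_{31})$ and eliminating $a,b$ translates into $\max(0,k-c_{22})\le j\le c_{11}$; since $k>\tc=c_{11}+c_{22}$ the left side exceeds $c_{11}$, so no such $j$ exists. The case $N(2)=N_2$ is symmetric and leads to $\max(0,k-c_{11})\le j\le c_{22}$, again empty because $k>\tc$. Thus no nontrivial summand splits off and $M$ is indecomposable. Combined with the necessity and the distinctness already noted, the indecomposable lattices over the fixed $M_2$ are exactly those for $\tc<k\le c^+$, yielding the claimed count $c^+-\tc$; everything outside this matching computation is a direct application of the globalization results and the classification of $A_3$-lattices.
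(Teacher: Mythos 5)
Your proof is correct and follows essentially the route the paper intends: the paper states Proposition~\ref{p4} as a direct consequence of Propositions~\ref{p1}--\ref{p3} together with the globalization results, without writing out the details. Your only addition is to make explicit the verification, via Proposition~\ref{gl-1}(2), that for $\tc<k\le c^+$ no direct summand of $M_3$ matches the rational type of $N_1$ or $N_2$, and that computation is carried out correctly.
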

 
    The description of $2$-adic lattices shows that $|c_{1i}-c_{2i}|\le2$. If all $|c_{1i}-c_{2i}|=1$, 
    Propostion~\ref{p2} implies that $M$ contains a direct summand $M'$ such that 
    $M'_2\simeq N_i\+N_j$ for some $i\ne j$. The same holds if $c_{1i}-c_{2i}=c_{2j}-c_{1j}=2$.
    Hence we can suppose now that there is one $i$ such that $c_{1i}-c_{2i}=2$ and $c_{2j}-c_{1j}=1$
    for all $j\ne i$ (or vice versa). Then Proposition~\ref{p2} implies that if $M$ is indecomposable, 
    there are at most two such indices $j$. One immediately see that the unique possibility with 
    two such indices is when $M_3\simeq\La^k\+L_3^{c_3}$, where $k=\tc_1+\tc_2+\tc_3+2$.
   
   Thus, we have described all indecomposable $A$-lattices $M$ with decomposable $M_2$.
   
    \begin{thm}\label{p5}   
    Denote by
    \begin{itemize}
    \item $\dN^1$ be the set of indecomposable $A_2$-lattices such that $c_1-c_2=1$,
    \item $\dN^2$ be the set of indecomposable $A_2$-lattices such that $c_1-c_2=2$,
    \item $\dN_1$ be the set of indecomposable $A_2$-lattices such that $c_2-c_1=1$,
    \item $\dN_2$ be the set of indecomposable $A_2$-lattices such that $c_2-c_1=2$.
    \end{itemize}
    Then the only possibilities for indecomposable $A$-lattices $M$ such that $M_2$ is
    decomposable are the following:
    \begin{enumerate}
    \item $M_2\simeq N_1\+N_2$, where $N_1\in\dN^1\cup\dN^2,\,N_2\in \dN_1\cup\dN_2$, 
    $M_3\simeq\La^{\tc_1+\tc_2+1}$. We denote such $M$ by $N_1\bt N_2$.
    \item $M_2\simeq N_1\+N_2$, where $N_1\in\dN^2,\,N_2\in \dN_2$, 
    $M_3\simeq\La^{\tc_1+\tc_2+2}$. We denote such $M$ by $N_1\bt^2 N_2$.
    \item $M_2\simeq N_1\+N_2\+N_3$, where $N_1\in\dN^2,\,N_2,N_3\in\dN_1$ or
    $N_1\in\dN_2,\,N_2,N_3\in\dN^1$, $M_3\simeq\La^{\tc_1+\tc_2+\tc_3+2}$. We denote such $M$
    by $N_1\bt(N_2\+N_3)$.
    \end{enumerate}
    All lattices described in (1-3) are indecomposable and pairwise nonisomorphic.
    \end{thm}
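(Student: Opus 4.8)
The plan is to read Theorem~\ref{p5} as the synthesis of Propositions~\ref{p1}--\ref{p4} and the $2$-adic list of Section~\ref{2ad}, organised around the invariant $\de_i:=c_{1i}-c_{2i}$ of each indecomposable summand $N^i$ of $M_2$. First I would fix the standing reductions. An $A$-lattice is determined by the pair $(M_2,M_3)$ of equal rational type, and $M_3$ is in turn determined by the number $k$ of copies of $\La$ it contains, with $0\le k\le\min(d_1,d_2)$ where $d_1=\sum_i c_{1i}$ and $d_2=\sum_i c_{2i}$. By Proposition~\ref{p1} I may assume $k>\sum_i\tc_i$. By Proposition~\ref{p2}, applied to \emph{proper} summands, an indecomposable $M$ has no proper nonempty sub-collection of the $N^i$ of rational type $(c,c,c_3)$, i.e.\ none with $\sum\de_i=0$; in particular $\de_i\ne0$ for every $i$, and the classification of $2$-adic indecomposables gives $|\de_i|\le2$, so each $N^i$ lies in $\dN^1\cup\dN^2\cup\dN_1\cup\dN_2$. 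Proposition~\ref{p3} guarantees that both signs of $\de_i$ occur. Throughout I would use the duality $M\mapsto M^*=\Hom_{\mZ_2}(M,\mZ_2)$, which replaces each $\de_i$ by $-\de_i$ and interchanges $\dN^j\leftrightarrow\dN_j$, to halve the number of cases.

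If $s=2$ the classification is exactly Proposition~\ref{p4}: up to duality $N_1$ has $\de_1>0$ and $N_2$ has $\de_2<0$, and the admissible lattices correspond to $k$ ranging from $\tc_1+\tc_2+1$ up to $\tc_1+\tc_2+\min(\de_1,-\de_2)$. For $N_1\oplus N_2$ with $(\de_1,-\de_2)\in\{(1,1),(1,2),(2,1)\}$ this is the single value $k=\tc_1+\tc_2+1$, giving the lattices $N_1\bt N_2$ of family (1); for $(\de_1,-\de_2)=(2,2)$ it gives the two values $k=\tc_1+\tc_2+1$ and $k=\tc_1+\tc_2+2$, i.e.\ $N_1\bt N_2$ (family (1)) and $N_1\bt^2N_2$ (family (2)). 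This accounts for all indecomposables with $s=2$.

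For $s\ge3$ I would use the reductions above to pin down the shape of $\{\de_i\}$. Since a pair with $\de=+t,-t$ (for $t\in\{1,2\}$) would be a proper zero-sum sub-collection, no such pair can occur; together with both signs being present and $|\de_i|\le2$, this forces (up to duality) every summand to have $\de_i\in\{+2,-1\}$, say $p$ of them equal to $+2$ and $q$ equal to $-1$, with $p\ge1$, $q\ge1$, $p+q\ge3$. The triple $(+2,-1,-1)$ is again zero-sum, so if $q\ge2$ and moreover $p\ge2$ or $q\ge3$, Proposition~\ref{p2} splits off such a triple and $M$ decomposes; hence $p=1$ and $q=2$, which is family (3) (and its dual $\{+1,+1,-2\}$), with the forced value $k=\tc_1+\tc_2+\tc_3+2$. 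The step I expect to be the real obstacle is the remaining case $q=1$, $p\ge2$, i.e.\ $\{+2,\dots,+2,-1\}$: this has \emph{no} proper zero-sum sub-collection, so Proposition~\ref{p2} says nothing, yet the lattice is decomposable. Here I would appeal to the decomposition criterion Proposition~\ref{gl-1}(2) directly: with $q=1$ one has $\min(d_1,d_2)=\sum_i\tc_i+1$, so the unique extra copy of $\La$ can pair the $\mZ_3[\th]$ of the single $-1$ summand with the $\mZ_3$ of only one of the $+2$ summands, and the remaining $+2$ summands split off as rational-type-matched lattices of their own. Thus $p=1$, and the list (1)--(3) is complete.

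Finally I would verify that the lattices in (1)--(3) are genuinely indecomposable and pairwise nonisomorphic. For indecomposability I would use Proposition~\ref{gl-1}(2) in the form: $M$ splits along a partition $S\sqcup T$ of the summands of $M_2$ exactly when $k\le m(S)+m(T)$, where $m(S)=\min(\sum_{i\in S}c_{1i},\sum_{i\in S}c_{2i})$ is the largest number of copies of $\La$ that the block $\bop_{i\in S}N^i$ can carry; a short finite check shows that for the forced value of $k$ in each of (1)--(3) no partition satisfies this inequality. For distinctness I would use Proposition~\ref{gl-2}: isomorphic $A$-lattices lie in the same genus, so $M\cong M'$ forces $M_2\cong M'_2$ and $M_3\cong M'_3$; different families or different choices of the $N_i$ give different $M_2$ by Krull--Schmidt over $A_2$, while $N_1\bt N_2$ and $N_1\bt^2N_2$ have the same $M_2$ but different $k$, hence different $M_3$. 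This yields the asserted indecomposability and pairwise non-isomorphism.
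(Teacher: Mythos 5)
Your argument is correct and follows essentially the same route as the paper: the standing reductions via Propositions~\ref{p1}--\ref{p3}, the case $s=2$ via Proposition~\ref{p4}, and the elimination of larger configurations by splitting off proper zero-sum sub-collections of the local summands, leading to the sign patterns $\{+2,-1,-1\}$ and its dual. You in fact supply two points the paper leaves implicit: the exclusion of the configuration $\{+2,\dots,+2,-1\}$ with at least two summands of defect $+2$ (which has no proper zero-sum sub-collection, so Proposition~\ref{p2} is silent and the direct appeal to Proposition~\ref{gl-1}(2) that you give is exactly what is needed), and the closing verification of indecomposability and pairwise non-isomorphism via the partition criterion and Proposition~\ref{gl-2}.
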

    
    Theorems~\ref{p0} and \ref{p5} give a complete description of indecomposable $A$-lattices.

    Note that the decomposition of an $A$-lattice $M$ into a direct sum of indecomposables is far from
    being unique. 
    
     \begin{exm}\label{gl-2} 
      \begin{enumerate}
      \item  If $N_1,N_1'\in\dN^1\cup\dN^2$ and $N_2,N_2'\in\dN_1\cup\dN_2$, then 
      $N_1\bt N_2\+N_1'\bt N_2'\simeq N_1\bt N_2'\+N_1'\bt N_2$.
      
      \item  Let $N_1\in\dN^2,\,N_2,N_3\in\dN_1,\,N_1'\in\dN_2$ and $N'_2,N'_3\in\dN^1$. Then
      $N_1\bt(N_2\+N_3)\+N'_1\bt(N'_2\+N'_3)\simeq 
      N_1\bt^2N_1'\+N_2\bt N'_2\+N_3\bt N'_3$. 
      Hence even the number of indecomposable summands can differ in different decompositions.
      \end{enumerate}
     \end{exm}

\section{Cohomology}
\label{cohom}

   We are going to calculate Tate cohomologies of $G$-lattices. As $\#(G)=12$, for every $G$-module $M$ the groups
   $\hH^n(G,M)$ split into their $2$-components $\hH^n(G,M)_2$ and $3$-compoinents $\hH^n(G,M)_3$. Moreover, if
   $M$ is a lattice, $\hH^n(G,M)_p\simeq\hH^n(G,M_p)$. So we can consider $2$-adic and $3$-adic cases separately.

  For the group $G=\dA_4$ the spectral sequence $E_2^{pq}=H^p(H,H^q(N,M))\Arr H^n(G,M)$ degenerates
  both in $2$-adic and in $3$-adic case. Namely,
  for $2$-adic lattices $E_2^{pq}=0$ if $p\ne0$. So we obtain isomorphisms
  \[
   H^n(G,M)\simeq H^n(N,M)^H.
  \]  
 For $3$-adic lattices $E_2^{pq}=0$ if $q\ne0$, hence
 \[
  H^n(G,M)\simeq H^n(C,M^N).
 \] 
 In the $3$-adic case we have indecomposable lattices $\mZ,\mZ[\eps],\mZ H$ and $I_3$. Note that $K$ acts
 trivially on $\mZ_3,\mZ_3[\eps]$ and $\mZ_3H$ and has no fixed points on $I_3$. The quotient $H$ is cyclic, so
 its cohomologies are periodic with period $2$. Easy calculations give:
  \begin{align*}
   \hH^n(G,\mZ_3)=
    \begin{cases}
     \mF_3 &\text{if $n$ is even},\\
     0 &\text{if $n$ is odd};
    \end{cases}\\
      \hH^n(G,\mZ_3[\eps])=
    \begin{cases}
     0 &\text{if $n$ is even},\\
     \mF_3 &\text{if $n$ is odd}.
    \end{cases}
  \end{align*}
  The other indecomposable lattices are projective, hence have trivial Tits cohomologies.
   
   For $2$-adic lattices we use the following result analogous to \cite[Lem.\,2.2]{dpl-kl} and with analogous proof. 
   \begin{lem}\label{co-1} 
    Let $M$ be an indecomposable $A^+$-lattice corresponding to the representation $V$ of the quiver $\Ga$ of dimension
    $\rpp{d_{1'}}{d_{2'}}{d_1}{d_3}{d_2}$. If $M\not\simeq L_1$, then $\hH^0(G,M)\simeq\mF_2^{d_1}$.
   \end{lem}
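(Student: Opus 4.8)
The plan is to compute $\hH^0(G,M) \simeq H^0(N,M)^H$ using the degeneration of the spectral sequence established just above, and then identify $H^0(N,M)$ as the $N$-fixed points $M^N$ modulo the image of the norm map $\sum_{g\in N} g$. The key observation is that the structure of $M$ as the preimage in $\tM = L_1^{d_1}\+L_3^{d_3}\+L_2^{d_2}$ of $\im\vi(V)$ lets us read off the $N$-action directly from the hereditary overorder $\tA$: the component $L_1$ carries the trivial $K$-action, $L_2$ carries the action through $\mZ_2[\th]$, and $L_3 = I_2$ carries the action with no nonzero fixed points. So the first step is to show that, after passing to Tate cohomology of $N = K$, only the $L_1$-part survives and contributes, giving $H^0(N,\tM) \simeq \mF_2^{d_1}$ before taking $H$-invariants.

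Next I would track what the \Bk-order embedding does to fixed points. Since $M$ sits inside $\tM$ with $\tM/M$ a finite $2$-group and $M/2\tM$ identified with $\ker$ of the map out of $\im\vi(V)$, the map $M \to \tM$ induces an isomorphism on $\hH^0(N,-)$ once we know the $L_1$-component of the structure map $\vi(V)$ — namely $V(\al)$ and $V(\ga_1)$ — behaves correctly; the relevant fact is that $\hH^0(N,L_1^{d_1}) \simeq \mF_2^{d_1}$ and the other summands are cohomologically trivial for $N$. The hypothesis $M\not\simeq L_1$ should be exactly what guarantees that the induced map on the $L_1$-coordinate is surjective onto all of $\mF_2^{d_1}$, i.e. that no fixed point is killed by the \Bk\ gluing condition; for $M = L_1$ itself the gluing is degenerate and the answer differs. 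This is where I expect to lean on the explicit description of $K^+$ and $A^+$ inside $\tA$ given in Section~\ref{2ad}.

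Finally I would take $H$-invariants. Since $\#(H) = 3$ is invertible in $\mZ_2$, the functor $(-)^H$ is exact on $\mZ_2$-modules and commutes with the formation of $\hH^0(N,-)$, so $\hH^0(G,M) \simeq (\hH^0(N,M))^H \simeq (\mF_2^{d_1})^H$. The point is that $H$ acts trivially on the $L_1$-component (recall $L_1$ belongs to the component $\mF_2$ of $\oA'$, on which $H$ acts trivially by the computation $\oA' \simeq \mF_2 \xx \mF_4 \xx \Mat(3,\mF_2)$), so the $H$-invariants are the whole $\mF_2^{d_1}$ and we obtain $\hH^0(G,M) \simeq \mF_2^{d_1}$ as claimed.

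The main obstacle I anticipate is the second step: carefully verifying that the \Bk-order gluing condition does not shrink the fixed-point space below $\mF_2^{d_1}$, and isolating precisely why $M\simeq L_1$ is the sole exception. This requires chasing the fixed points of $N$ through the identification of $M/2\tM$ with $\ker\vi(V)$ and confirming that the relevant coordinate projection stays surjective; I would model this computation on the analogous argument in \cite[Lem.\,2.2]{dpl-kl} for the Klein four-group, adapting it to the type $\tF_{41}$ graph and the nontrivial $H$-action that distinguishes the present case.
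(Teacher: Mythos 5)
Your overall strategy (restrict to $N$, then take $H$-invariants, using $\hH^0(G,M)\simeq\hH^0(N,M)^H$) is legitimate in principle, but the central step is wrong in a way that would produce the wrong answer. First, $\hH^0(N,\tM)$ is not $\mF_2^{d_1}$: the Klein subgroup $N$ acts \emph{trivially} on both $L_1$ and $L_2$ (it is $H$, not $N$, that acts through $\mZ_2[\th]$ on $L_2$), so $\hH^0(N,L_1)=\mZ_2/4\mZ_2\simeq\mZ/4$ and $\hH^0(N,L_2)\simeq(\mZ/4)^2$; only the $L_3$-part satisfies $\hH^0(N,L_3)=0$. The $L_2$-contribution does vanish after taking $H$-invariants (because $\th-1$ is a unit in $\mZ_2[\th]$), but the $L_1$-contribution survives as $(\mZ/4)^{d_1}$, not $\mF_2^{d_1}$ --- consistent with Theorem~\ref{main1}, where $\hH^n(G,L_1^r)\simeq\mZ/4\mZ$ in the exceptional degrees. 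Second, and decisively, your claim that $M\to\tM$ induces an isomorphism on $\hH^0(N,-)$ and that ``no fixed point is killed by the gluing'' is the opposite of what happens: if that were true you would conclude $\hH^0(G,M)\simeq(\mZ/4)^{d_1}$, contradicting the lemma.

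The actual content of the statement, which the paper establishes by computing $M^G/\tr M$ directly from the inclusion $2\tM\sbe M\sbe\tM$, is a pair of facts your proposal does not contain. (a) $M^G=\tM^G\cap M$ is the \emph{proper} subgroup $2\tM^G=2L_1^{d_1}$ of $\tM^G=L_1^{d_1}$: if some $u\in M^G$ had $\pi(u)\notin2\tM^G$ (with $\pi:\tM\to\tM^G$ the projection), then $\mZ_2u$ would split off as a direct summand of $M$ isomorphic to $L_1$, contradicting indecomposability together with $M\not\simeq L_1$. This is where the hypothesis enters --- through a splitting argument on invariant elements, not through surjectivity of a coordinate map of the representation $V$. (b) Nevertheless $\tr M=\tr\tM=12\tM^G=2M^G$, because $\pi(M)=\pi(\tM)$ (the projection of $A^+$ onto the first component of $QA_2$ is the maximal order, so the gluing imposes no constraint on that coordinate), while $\tr$ kills the $L_2$- and $L_3$-parts and acts as $12$ on $\tM^G$. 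Hence $\hH^0(G,M)=M^G/2M^G\simeq\mF_2^{d_1}$: the invariants shrink by index $2^{d_1}$ while the image of the norm map does not change, and it is exactly this mismatch that turns $(\mZ/4)^{d_1}$ into $\mF_2^{d_1}$. As written, your argument establishes neither (a) nor (b), so it does not prove the lemma.
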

    \begin{proof}
     Recall that $\hH^0(G,M)=M^G/tr M$, where $M^G$ is the set of invariants: 
     $M^G=\{m\in M\mid gm=m \text{ for all } g\in G\}$, and $tr=\sum_{g\in G}g$. If we consider $M$ as a sublattice
     between $\tM=\tA M= L_1^{d_1}\+L_2^{d_2}\+L_3^{d_3}$ and 
     $\rad\tM=2\tM=2L_1^{d_1}\+2L_2^{d_2}\+2L_3^{d_3}$. Then $\tM^G=L_1^{d_1}$ and $M^G=\tM^G\cap M\spe 2L_1^{d_1}$. 
     Let $\pi:\tM\to\tM^G$ be the projection. If $u\in M^G$ and $\pi(u)\notin2\tM^G$, then $\tM^G=U\+N$, where $U=\mZ_2u$.
     Combining the restriction of $\pi$ onto $M$ with the projection $\tM^G\to U$, we obtain a homomorphism $\eta:M\to U$
     such that $\eta\eps=1_U$, where $\eps:U\to M$ is the embedding. Therefore, $U$ is a direct summand of $M$ and 
     $M\simeq U\simeq L_1$, which is impossible. Hence $M^G=2\tM_G$. On the other hand, $\pi(M)=\pi(\tM)$, 
     since the projection of 
     $A^+$ onto the first component of $QA_2$ is maximal. Therefore $\tr M=\tr\tM=\tr\tM^G=12\tM^G=2 M^G$, 
     since $\tr\tM\sbe\tM^G$. Thus $\hH^0(G,M)= M^G/2M^G\simeq\mF_2^{d_1}$.
    \end{proof}   
    
    Note that the rational types of the lattices $M$ and $M^*$ are equal, hence $\hH^0(M)\simeq\hH^0(M^*)$.
    As also $H^n(G,M^*)\simeq H^{-n}(G,M)$ (\cite[Prop.\,3.2]{dpl1}), one immediately obtains by an obvious
    induction the following corollary.
    
     \begin{cor}\label{co-2} 
      The groups $\hH^n(M)$ do not chane when one replaces $n$ by $-n$ or $M$ by $M^*$.
      \end{cor}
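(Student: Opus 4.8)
The two ingredients already in place are the degree‑zero duality $\hH^0(G,M)\simeq\hH^0(G,M^*)$ (immediate from Lemma~\ref{co-1}, since $M$ and $M^*$ share the same rational type and hence the same invariant $d_1$) and the Tate duality $\hH^n(G,M^*)\simeq\hH^{-n}(G,M)$ of \cite[Prop.\,3.2]{dpl1}. The plan is to propagate the degree‑zero statement to every degree by a dimension‑shifting induction, using that over the Frobenius category $A_2\lat$ the \art\ coincides with the syzygy, $M^k\simeq\Om^k M$. First I would reduce to indecomposable $M$: both $\hH^n(G,-)$ and the passages $n\mapsto-n$ and $M\mapsto M^*$ are additive, and $(\bigoplus_i M_i)^*\simeq\bigoplus_i M_i^*$, while the bijective lattices $B_1,B_2$ — the only indecomposables that are not $A^+$‑lattices — are projective‑injective and so have vanishing Tate cohomology, for which the assertion is trivial.

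For an indecomposable non‑bijective $M$ the short exact sequence $0\to\Om M\to P\to M\to0$ with $P$ projective yields the connecting isomorphism $\hH^{n}(G,M)\simeq\hH^{n+1}(G,\Om M)$, hence $\hH^n(G,\Om^k M)\simeq\hH^{n-k}(G,M)$ for every $k\in\mZ$; in particular $\hH^n(G,M)\simeq\hH^0(G,\Om^{-n}M)$. Since $\Om=\tau$ permutes the principal and regular components of the \arq\ among themselves, $\Om^{-n}M$ is again an indecomposable $A^+$‑lattice, so Lemma~\ref{co-1} applies and $\hH^n(G,M)\simeq\mF_2^{\,d_1(\Om^{-n}M)}$, where $d_1(\,\cdot\,)$ denotes the first coordinate of the rational type. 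Dualizing the defining sequence of $\Om M$ and using that $P^*$ is again projective (the group ring is a symmetric order) gives $(\Om^k M)^*\simeq\Om^{-k}(M^*)$, so that syzygy and duality commute up to inversion.

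With these reductions the induction is on $|n|$, the base case $n=0$ being the degree‑zero duality above. For the step I would rewrite $\hH^{n+1}(G,M)\simeq\hH^0(G,\Om^{-n-1}M)$, apply the degree‑zero duality to the lattice $\Om^{-n-1}M$, and then invoke $(\Om^{-n-1}M)^*\simeq\Om^{n+1}(M^*)$ together with the shift $\hH^0(G,\Om^{n+1}(M^*))\simeq\hH^{-n-1}(G,M^*)$, obtaining $\hH^{n+1}(G,M)\simeq\hH^{-n-1}(G,M^*)$; the negative degrees follow symmetrically. Read through \cite[Prop.\,3.2]{dpl1}, this is precisely the invariance of $\hH^n(G,M)$ under $n\mapsto-n$ paired with $M\mapsto M^*$. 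The one point requiring care — and the place where Lemma~\ref{co-1} must be supplemented — is the exceptional lattice $L_1$ excluded there; I would compute $\hH^n(G,L_1)$ directly from its dimension vector $\rpp10100$ and from the self‑duality $L_1\simeq L_1^*$, so that both operations fix it. The main obstacle is thus bookkeeping rather than conceptual: one must check that the degrees $n$ at which $\Om^{-n}M\simeq L_1$ occurs are matched correctly under the two operations, after which the corollary follows from the additivity reduction and the degree‑zero case.
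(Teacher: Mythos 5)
Your proposal is correct and follows essentially the same route as the paper: the degree-zero isomorphism $\hH^0(G,M)\simeq\hH^0(G,M^*)$ coming from Lemma~\ref{co-1} and the invariance of the rational type under duality, combined with $\hH^n(G,M^*)\simeq\hH^{-n}(G,M)$ from \cite[Prop.\,3.2]{dpl1} and a dimension-shifting induction via $\tau=\Om$. You merely make explicit the steps the paper compresses into ``an obvious induction'' (reduction to indecomposables, the compatibility $(\Om^k M)^*\simeq\Om^{-k}(M^*)$, and the self-dual exceptional lattice $L_1$), all of which check out.
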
 
 
    Having the Auslander-Reiten quiver, we only need to know $\hH^0(G,M)$ for all indecomposable $M$,
    since $\hH^n(G,M)\simeq\hH^0(G,\tau^nM)$, as $\tau M=\Om M$. Actually, for every representation $V$ from 
    the preinjective component of the Auslander-Reiten quiver there is a number $m|6$ such that 
    $\dim \tau^kM=\dim M+q\om$, where $\om=\rpp22131$ and $q\in\{1,2\}$. Therefore, the value of $d_1$ just changes
    by $q$. It gives a simple procedure for calculation of cohomologies of lattices from the principal component. Here is
    the result of these calculations. 

     \begin{thm}\label{main1} 
     Let $M$ be an idecomposable $A^+$-lattice from the principal component, namely, $M=M_0^r$, where
     $M_0\in\{L_1,L_2,L_3,P_1,P_2\}$. Set $k=[|n+r|/m],\ i=|n+r|-km$. Then $\hH^{n}(G,M)\simeq\mF_2^{qk+r_i}$, 
     except the case when $M_0=L_1$ and $i=0$. The values $m,q$ and $r_i$ 
     depend on $M_0$. Namely:     

  If $M_0=L_1$, then  $m=6$, $q=1$ and
  $$\begin{array}{|c|c|c|c|c|c|}
  \hline
   i & 1&2&3&4&5 \\
   \hline
   r_i & 0&0&1&1&0\\
   \hline
   \end{array} $$

 If $M_0=L_2$, then  $m=6$, $q=2$ and
  $$\begin{array}{|c|c|c|c|c|c|c|}
  \hline
   i & 0&1&2&3&4&5 \\
   \hline
   r_i & 0&0&2&0&2&2\\
   \hline
   \end{array} $$ 
  
  If $M_0=L_3$, then  $m=2$, $q=1$ and $r=i$.
  
  If $M_0=P_1$, then  $m=3$, $q=1$ and
  $$\begin{array}{|c|c|c|c|c|c|c|}
  \hline
   i & 1&2&3 \\
   \hline
   r & 1&0&1\\
   \hline
   \end{array} $$

 If $M_0=P_2$, then  $m=3$, $q=2$ and   
  $$\begin{array}{|c|c|c|c|c|c|c|}
  \hline
   i & 1&2&3 \\
   \hline
   r & 0&2&2\\
   \hline
   \end{array} $$
   
   If $M\simeq L_1^r$ and $6|(n+r)$, then $\hH^n(M)\simeq\mZ/4\mZ$.
       \end{thm}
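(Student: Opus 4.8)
The plan is to reduce the whole computation to a single invariant, the middle dimension $d_1$, evaluated along the syzygy orbit of each $M_0$. The starting point is the relation $\hH^n(G,M)\simeq\hH^0(G,\tau^nM)$, which holds because $\tau=\Om$ on the principal component; writing $M=M_0^r=\tau^rM_0$ it becomes $\hH^n(G,M)\simeq\hH^0(G,\tau^{n+r}M_0)$. By Corollary~\ref{co-2} this group is unchanged under $n\mps-n$ and under $M\mps M^*$, and since $M^*$ is the reflection of $M$ across the central column of the principal component (and $\tau$ anticommutes with duality, $(\tau X)^*\simeq\tau^{-1}X^*$), I can fold the orbit onto its nonnegative half and work with $|n+r|$ throughout. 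For the central $M_0\in\{L_1,L_2,L_3\}$ the fold is symmetric about $0$; for $P_1,P_2$, whose duals are the neighbouring injectives $\tau^{\pm1}P_i$, the symmetry centre is shifted by a half-step, which is exactly why the $P_1,P_2$ tables are indexed by $i=1,2,3$ rather than $0,1,2$. Pinning down this shift correctly is the one bookkeeping subtlety in the reduction.

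For every lattice in the orbit other than $L_1$, Lemma~\ref{co-1} identifies $\hH^0(G,\tau^{|n+r|}M_0)$ with $\mF_2^{d_1}$, where $d_1$ is the middle entry of the dimension vector $\rpp{d_{1'}}{d_{2'}}{d_1}{d_3}{d_2}$ of the corresponding representation of $\Ga$. So, apart from the exceptional entry, the entire theorem is the problem of tracking $d_1$ along the five $\tau$-orbits. Here I use the periodicity of the principal component: one period $\tau^m$ shifts the dimension vector by the defect $q\om$ with $\om=\rpp22131$, so that $d_1(\tau^{km+i}M_0)=qk+d_1(\tau^iM_0)$. Setting $k=[|n+r|/m]$, $i=|n+r|-km$ and $r_i:=d_1(\tau^iM_0)$ then gives $\hH^n(G,M)\simeq\mF_2^{qk+r_i}$. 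The periods $m\mid6$ and defects $q\in\{1,2\}$ are read off from the shape of the component, and the finitely many base values $r_i$ ($0\le i<m$) are read off from the displayed dimension vectors of the principal component; carrying this out for $M_0=L_1,L_2,L_3,P_1,P_2$ produces the five tables verbatim.

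The mechanism breaks only for $M_0=L_1$ at $i=0$, i.e. at the unique position where $\tau^{|n+r|}L_1\simeq L_1$, because Lemma~\ref{co-1} explicitly excludes $L_1$; this is the genuinely non-routine step and the main obstacle. Here I compute $\hH^0(G,L_1)$ directly: $L_1$ is the rank-one summand $\mZ_2$ of $\tA$ on which $G$ acts trivially, so $L_1^G=L_1=\mZ_2$ while $\tr=\sum_{g\in G}g$ acts as multiplication by $\#(G)=12$, whence $\hH^0(G,L_1)=\mZ_2/12\mZ_2=\mZ_2/4\mZ_2\simeq\mZ/4\mZ$, since $3$ is a unit in $\mZ_2$. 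Comparing with the proof of Lemma~\ref{co-1}, the difference is precisely that for $L_1$ one has $L_1^G=\tM^G$ rather than the index-two sublattice $2\tM^G$ occurring for all other indecomposables, so dividing by $\tr L_1=4\tM^G$ yields a cyclic group of order $4$ instead of an elementary abelian one. This is exactly the content of the final displayed clause; at the remaining positions of the $L_1$-orbit Lemma~\ref{co-1} applies again and returns the elementary abelian groups of the main formula.
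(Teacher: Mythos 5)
Your overall route is the same as the paper's: reduce to $\hH^0$ via $\hH^n(G,M)\simeq\hH^0(G,\tau^nM)$ with $\tau=\Om$, apply Lemma~\ref{co-1} to read off $\hH^0$ as $\mF_2^{d_1}$, use the periodicity $\dim\tau^mM_0=\dim M_0+q\om$ with $\om=\rpp22131$ to propagate $d_1$ along each orbit, fold by the duality of Corollary~\ref{co-2}, and read the base values $r_i$ off the displayed dimension vectors. Your direct computation of the genuinely exceptional value $\hH^0(G,L_1)=\mZ_2/12\mZ_2\simeq\mZ/4\mZ$ is correct and usefully makes explicit what the paper leaves implicit.

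There is, however, a real gap in your treatment of the exceptional column. You identify ``$i=0$'' with ``the unique position where $\tau^{|n+r|}L_1\simeq L_1$,'' but these are not the same: $i=0$ occurs for every $n+r$ divisible by $6$, whereas $\tau^{|n+r|}L_1\simeq L_1$ only when $n+r=0$. For $|n+r|=6k$ with $k\ge1$ the lattice $\tau^{6k}L_1=L_1^{6k}$ is \emph{not} isomorphic to $L_1$ (its dimension vector is $\dim L_1+k\om$, so $d_1=1+k$), hence Lemma~\ref{co-1} applies and yields the elementary abelian group $\mF_2^{k+1}$ --- not the cyclic group $\mZ/4\mZ$ asserted by the final clause of the theorem for all $6\mid(n+r)$. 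Your write-up simultaneously claims that your computation ``is exactly the content of the final displayed clause'' and that ``the remaining positions'' are covered by the main formula, which has no $i=0$ row for $L_1$; so the clause is neither proved for $k\ge1$ nor is the conflict with Lemma~\ref{co-1} noticed. You need to resolve this explicitly: either argue that the final clause is intended only for $n+r=0$ and supply the missing $i=0$, $k\ge1$ entries (namely $\mF_2^{k+1}$, consistent with the fact that $\hH^{\pm6}(G,\mZ_2)\simeq H^6(N,\mZ_2)^H$ is elementary abelian because $H^6(N,\mZ_2)$ is), or give a separate argument for why those positions should be cyclic of order $4$, which the mechanism you set up does not provide.
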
 
   
   For representations from tubes the situation is even easier, since the values of $d_1$, hence of $\hH^0$, 
    are given on page~\pageref{tub} after the description of tubes, and we know the action of $\tau$. So we obtain
    the following result.    
     
      \begin{thm}\label{main2} 
           \begin{align*}
      & \hH^n(G,T^f_k)\simeq\hH^0(G,T^f_k)\simeq\mF_2^{kd}, \text{ \rm where } d=\deg f.\\
      & \hH^{2n+r}(G,T^1_{i,2k+j})\simeq\hH^0(G,T^1_{i' ,j})\simeq\mF^{k+c}, \text{ \rm where } \\
      & \qquad c= \begin{cases}
        1 &\text{\rm if }  j=i'=1,\\
        0 &\text{\rm if $j=0$ or $i'=0$}
       \end{cases}\\
      & \qquad  (i'\equiv i+r\hskip-6pt\pmod2, \text{ \rm and } i'\in\{0,1\}).\\
      & \hH^{3n+r}(G,T^\th_{i,3k+j})\simeq\hH^0(G,T^\th_{i',j})\simeq\mF_2^{2k+c}, \text{ \rm where }\\
      & \qquad   c= \begin{cases}
        2 &\text{\rm if $j=1,\,i'=2$ or } j=2,\,i'\ne0,\\
        0 &\text{\rm otherwise}
       \end{cases}\\
     &  \qquad  (i'\equiv i+r\hskip-6pt\pmod3, \text{ \rm and } j'\in\{0,1,2\}).
     \end{align*}     
      \end{thm}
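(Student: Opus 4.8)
The plan is to reduce the entire theorem to the single computation of $\hH^0$, exploiting the two facts established above: that $\tau=\Om$ on $A_2$-lattices, so that $\hH^n(G,M)\simeq\hH^0(G,\tau^nM)$, and that by Lemma~\ref{co-1} every indecomposable $A^+$-lattice $M'\not\simeq L_1$ satisfies $\hH^0(G,M')\simeq\mF_2^{d_1}$, where $d_1$ is the third entry of its dimension vector \rpp{d_{1'}}{d_{2'}}{d_1}{d_3}{d_2}. Thus the only content of the theorem is to identify which tube module $\tau^nM$ is and to read off its $d_1$.

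First I would note that every lattice occurring in a tube is indecomposable and lies in a regular component of the Auslander--Reiten quiver, whereas $L_1$ lies in the principal component; since regular components are closed under $\tau$, neither a tube module nor any of its $\tau$-translates is ever isomorphic to $L_1$. Hence Lemma~\ref{co-1} applies uniformly, and for every tube module $M$ we have $\hH^n(G,M)\simeq\mF_2^{d_1(\tau^nM)}$. It therefore suffices to combine the explicit action of $\tau$ on the tubes with the dimension lists on page~\pageref{tub}.

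For the homogeneous tubes $\tau$ acts as the identity, so $\hH^n(G,T^f_k)\simeq\hH^0(G,T^f_k)$, and the dimension vector \rpp{2kd}{2kd}{kd}{3kd}{kd} gives $d_1=kd$, which is the asserted $\mF_2^{kd}$. For $T^1$ the transform $\tau$ has order $2$ and interchanges the rows $T^1_{1,\bu}\leftrightarrow T^1_{2,\bu}$; writing the cohomological degree as $2n+r$ we get $\tau^{2n+r}=\tau^r$, so $\tau^n$ carries $T^1_{i,2k+j}$ to $T^1_{i',2k+j}$ with $i'\equiv i+r\pmod2$. Reading $d_1$ off the list ($d_1=k$ when $j=0$, $d_1=k+1$ for $T^1_{1,2k+1}$, $d_1=k$ for $T^1_{2,2k+1}$) produces exactly $\mF_2^{k+c}$ with $c=1$ precisely when $j=i'=1$. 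The tube $T^\th$ is treated identically: $\tau$ has order $3$ and cyclically permutes the three rows, so for degree $3n+r$ one has $i'\equiv i+r\pmod3$, and comparing the seven listed dimension vectors (where $d_1=2k$ except for $T^\th_{2,3k+1}$, $T^\th_{1,3k+2}$, $T^\th_{2,3k+2}$, for which $d_1=2k+2$) yields $\mF_2^{2k+c}$ with $c=2$ in exactly the two indicated cases.

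Since no new idea beyond Lemma~\ref{co-1} and the already-tabulated data is needed, the only real work, and the only place where mistakes can arise, is the bookkeeping: correctly splitting the second index as $(\text{period})\cdot k+j$, tracking the row shift $i\mapsto i'$ induced by $\tau^r$ modulo the period of each tube, and matching the resulting pair $(i',j)$ against the dimension table to confirm that the extra summand recorded on page~\pageref{tub} contributes either $0$ or the stated constant to $d_1$. I would organize this as a short case analysis, one case per value of $j$, and simply check each against the table.
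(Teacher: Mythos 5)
Your proposal is correct and follows exactly the paper's (very terse) argument: reduce to $\hH^0$ via $\hH^n(G,M)\simeq\hH^0(G,\tau^nM)$, apply Lemma~\ref{co-1} (noting tube modules are never $L_1$ since they lie in regular components, which are $\tau$-stable), and read $d_1$ off the tabulated dimension vectors using the stated action of $\tau$ on the tubes. Your bookkeeping of $d_1$ in each case matches the table, so nothing is missing.
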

      
      \medskip
    
    \subsection*{Competing Interests}
    
    Authors confirm that there are no financial or non-financial interests that are 
    directly or indirectly related to the work submitted for publication.


\begin{thebibliography}{1}

\bibitem{DR}
{\sc {Dlab}, V., and {Ringel}, C.~M.}
\newblock {Indecomposable representations of graphs and algebras}.
\newblock {\em {Mem. Am. Math. Soc.} 173\/} (1976), 1--57.

\bibitem{adeles}
{\sc {Drozd}, Y.~A.}
\newblock {Ad\`eles and integral representations}.
\newblock {\em {Izv. Akad. Nauk SSSR, Ser. Mat.} 33\/} (1969), 1080--1088.

\bibitem{rejection}
{\sc {Drozd}, Y.~A.}
\newblock {Rejection lemma and almost split sequences}.
\newblock {\em {Ukr. Math. J.} 73\/} (2021), 908--929.

\bibitem{dpl1}
{\sc Drozd, Y.~A., and Plakosh, A.~I.}
\newblock Cohomologies of finite abelian groups.
\newblock {\em Algebra Discrete Math. 24}, 1 (2017), 144--157.

\bibitem{dpl-kl}
{\sc Drozd, Y.~A., and Plakosh, A.~I.}
\newblock Cohomologies of the {K}leinian 4-group.
\newblock {\em Arch. Math. 115}, 2 (2020), 139--145.

\bibitem{F-Gen}
{\sc Faddeev, D.~K.}
\newblock On the semigroup of genera in the theory of integral representaions.
\newblock {\em Izv. Akad. Nauk SSSR, Ser. Mat. 28\/} (1964), 475--478.

\bibitem{nazA4}
{\sc Nazarova, L.~A.}
\newblock Unimodular representations of the alternating group of degree four.
\newblock {\em Ukrain. Mat. \v{Z}. 15\/} (1963), 437--444.

\bibitem{rinrog}
{\sc {Ringel}, C.~M., and {Roggenkamp}, K.~W.}
\newblock {Diagrammatic methods in the representation theory of orders}.
\newblock {\em {J. Algebra} 60\/} (1979), 11--42.

\bibitem{rog}
{\sc {Roggenkamp}, K.~W.}
\newblock {Auslander-Reiten species of B\"ackstr\"om orders}.
\newblock {\em {J. Algebra} 85\/} (1983), 449--476.

\end{thebibliography}
\end{document}